\documentclass[conference]{ieeeconf}
\IEEEoverridecommandlockouts
\overrideIEEEmargins


\usepackage{epsfig} 
\usepackage{times} 
\usepackage{amsmath} 
\usepackage{amssymb}  
\usepackage{url}
\usepackage{enumerate}
\usepackage[final]{changes}
\usepackage{cite}
\usepackage{color}
\usepackage{xspace}
\usepackage{algorithmic}

\usepackage[ruled]{algorithm}
\graphicspath{{./figures/}}
\usepackage{graphicx}
\usepackage{subfigure}

\usepackage{epstopdf}
\usepackage{float}
\DeclareGraphicsExtensions{.png}
\usepackage{comment}
\usepackage{caption}


\newtheorem{lemma}{Lemma}
\newtheorem{theorem}{Theorem}

\newtheorem{assumption}{Assumption}

\newtheorem{example}{Example}

\def\prox{\operatorname{prox}}
\def\R{\mathbb{R}}
\def\N{\mathbb{N}}

\title{\LARGE \bf Optimal convergence rates of \replaced{totally asynchronous optimization}{asynchronous optimization methods}}

\author{Xuyang Wu, Sindri Magn\'{u}sson, Hamid Reza Feyzmahdavian, and Mikael Johansson\thanks{X. Wu and M. Johansson are with the Division of Decision and Control Systems, School of Electrical Engineering and Computer Science, KTH Royal Institute of Technology, SE-100 44 Stockholm, Sweden. Email: {\tt {\{xuyangw,mikaelj\}@kth.se}.}}
\thanks{S. Magn\'{u}sson is with the Department of Computer and System Science, Stockholm University, SE-164 07 Stockholm, Sweden. Email: {\tt sindri.magnusson@dsv.su.se}.} \thanks{H. R. Feyzmahdavian is with ABB Cooperate Research, SE-721 78 V\"{a}ster\r{a}s, Sweden. Email: {\tt hamid.feyzmahdavian@se.abb.com}}\thanks{This work was supported in part by the funding from Digital Futures and in part by the Swedish Research Council (Vetenskapsr\r{a}det) under grant 2020-03607.}}

\begin{document}

	\maketitle

	\begin{abstract}
	Asynchronous optimization algorithms are at the core of modern machine learning and resource allocation systems. However, most convergence results consider bounded information delays and several important algorithms lack guarantees when they operate under total asynchrony. In this paper, we derive explicit convergence rates for the proximal incremental aggregated gradient (PIAG) and the asynchronous block-coordinate descent (Async-BCD) methods under a specific model of total asynchrony, and show that the derived rates are order-optimal. The convergence bounds provide an insightful understanding of how the growth rate of the delays deteriorates the convergence times of the algorithms. Our theoretical findings are demonstrated by a numerical example.
	\end{abstract}

	%
	%
\section{Introduction}
\if
Story idea:
\begin{itemize}
    \item Why asynchronous iterations/optimization. Cite representative work to motivate current interest in the area (both from CDC and elsewhere)
    \item Bertskas and Tsitsiklis. Restricted class of iteration converges also under unbounded asynchrony. Modern optimization algorithms rarely satisfy these conditions.
    \item A lot of results for bounded delays (cite a few papers), but only very few results are available when the upper bound is not known, or when it may grow without a fixed bound. Cite a few works, discuss their shortcomings
    \item Explain our key results. Why are they challenging, interesting, useful.
\end{itemize}
\fi
 Distributed and parallel algorithms are powerful tools for solving large-scale problems. These algorithms coordinate multiple computing nodes to solve the overall problem. The coordination can be synchronous, meaning that each node needs to wait for all other nodes to successfully conclude their computations and communications before proceeding to the next iteration. This is clearly inefficient: the slowest node dictates the convergence speed, systems become sensitive to single node failures, and the implementation overhead for synchronization can be large.  Therefore,  asynchronous algorithms that need no synchronization are often preferred \cite{iutzeler2013asynchronous,zhang2019asyspa,assran2020advances}. However, compared to synchronous algorithms, asynchronous algorithms are more difficult to analyze, and their convergence properties are not as well understood. 

Early efforts on establishing convergence properties of asynchronous algorithms  were made in the 1980s by Bertsekas and Tsitsiklis, e.g.,~\cite{bertsekas1983distributed,tsitsiklis1986distributed,bertsekas1989convergence}. They considered two models for asynchrony, partial asynchrony (``bounded delays'') and total asynchrony (``unbounded delays''), and analyzed convergence for certain classes of algorithms under these two models. However, these algorithm classes do not cover many modern optimization algorithms, such AsySPA \cite{zhang2019asyspa}, PIAG \cite{aytekin16}, Async-BCD \cite{liu2014}, Arock \cite{Peng16}, and Asynchronous SGD \cite{recht2011hogwild}.

In the last decade, the convergence of many modern algorithms is established under partial asynchrony, e.g., AsySPA \cite{zhang2019asyspa}, PIAG \cite{aytekin16,vanli2018global,Sun19}, Async-BCD \cite{liu2014,Davis16,Sun17}, ARock \cite{Peng16,Feyzmahdavian21}, and Asynchronous SGD \cite{recht2011hogwild}. However, only a few algorithms are shown to work under total asynchrony~\cite{ren2020delay,zhou2021distributed,mishchenko2018delay}. In particular, \cite{ren2020delay,zhou2021distributed} study Asynchronous SGD and \cite{mishchenko2018delay} focuses on a delay-tolerant averaged proximal gradient algorithm.
 None of these papers cover total asynchrony for PIAG and Async-BCD, which are the focus of our work.
 Moreover, different from us, the works \cite{ren2020delay,zhou2021distributed,mishchenko2018delay} do not characterize how the unbounded delay affects convergence rates or explores the existence of an optimal rate.
 %

This paper studies two asynchronous optimization algorithms, PIAG and Async-BCD, under total asynchrony. None of these algorithms has been proven to converge under total asynchrony before.
We make the following contributions:
\begin{itemize}
    \item We derive explicit convergence rates for PIAG and Async-BCD under a model of total asynchrony.
    \item We prove that the derived convergence rates for the two methods are optimal in terms of order.
    \item We use the convergence bounds to provide insight and understanding of how the growth rate of delays slows down the convergence of PIAG and Async-BCD.
\end{itemize}

\subsection*{Notation and Preliminaries} We use $\mathbb{N}$ and $\mathbb{N}_0$ to denote the set of natural numbers and the set of natural numbers including zero, respectively. We let $[m] = \{1,\ldots,m\}$ for any $m\in\mathbb{N}$ and represent $x\in\mathbb{R}^d$ as $x=(x^{(1)}, \ldots,x^{(m)})$, where each $x^{(i)}\in\mathbb{R}^{d^{(i)}}$ and $\sum_{i=1}^n d^{(i)} = d$. We define the proximal operator of a function $r:\mathbb{R}^d\rightarrow\mathbb{R}\cup\{+\infty\}$ as
\begin{equation*}
	\operatorname{prox}_{r}(x) = \underset{y\in\mathbb{R}^d}{\operatorname{\arg\!\min}} ~r(y)+\frac{1}{2}\|y-x\|^2.
\end{equation*}
We say a differentiable function $f:\R^d\rightarrow \R$ is $L$-smooth if
\begin{equation*}
	\|\nabla f(x)-\nabla f(x+h)\|\le L\|h\|,~\forall x,h\in\mathbb{R}^d.
\end{equation*}
We call $f$ is $\hat{L}$-block-wise smooth with respect to a partition $x=(x^{(1)}, \ldots,x^{(m)})$ if for all $i,j\in[m]$, and $h^{(j)}\in\R^{d^{(j)}}$,
\begin{equation}\label{eq:coordinatesmooth}
	\|\nabla_i f(x+U^{(j)}h^{(j)}) - \nabla_i f(x)\|\le \hat{L}\|h^{(j)}\|.
\end{equation}
Here, $\nabla_j f(\cdot)$ is the partial gradient of $f$ with respect to the $j$th block and $U^{(j)}:\mathbb{R}^{d^{(j)}}\rightarrow\mathbb{R}^d$ maps $h^{(j)}\in\mathbb{R}^{d^{(j)}}$ into a $d$-dimensional vector where the $j$th block is $h^{(j)}$ and other blocks are zero. For an $L$-smooth function $f$ and a convex function $r:\R^d\rightarrow \R\cup\{+\infty\}$, we say  $P(x)=f(x)+r(x)$ satisfies the proximal PL condition \cite{karimi16} for $\sigma>0$ if
\begin{equation}\label{eq:proximalPLcond}
	\sigma(P(x)-P^\star)\le -L\hat{P}(x),~\forall x\in\operatorname{dom}(P),
\end{equation}
where $P^\star=\min_{x\in\mathbb{R}^d} P(x)$ and 
\begin{equation*}
\hat{P}(x)=\min_{y\in\mathbb{R}^d}~\{\langle \nabla f(x), y-x\rangle+\frac{L}{2}\|y-x\|^2+r(y)-r(x)\}.
\end{equation*}

\section{Problem Statement}
\label{sec:probformuandtrans}
\replaced{We focus on optimization problems on the form}{
This paper considers the following problem:}
\begin{equation}\label{eq:prob}
	\underset{x\in\mathbb{R}^d}{\min}~ P(x) = f(x)+r(x),
\end{equation}
where $f:\mathbb{R}^d\rightarrow\mathbb{R}$ is smooth and possibly non-convex, and $r:\mathbb{R}^d\rightarrow\mathbb{R}\cup\{+\infty\}$ is convex \replaced{but possibly}{and may be} non-differentiable. Such a composite structure is common \deleted{in optimization, e.g.,} in\added{, for example,} machine learning where $f$ is a loss and $r$ is a regularizer. The function $r$ can also \replaced{represent the}{include the} indicator function of a convex set.


We \replaced{consider}{use} the proximal incremental aggregated gradient (PIAG) algorithm \cite{aytekin16} and the asynchronous block-coordinate descent (Async-BCD) method \cite{liu2014} to solve \eqref{eq:prob}.

\subsection{PIAG}
The algorithm solves problem \eqref{eq:prob} for $f$ on the form
\begin{align*}
    f(x) &= \frac{1}{n}\sum_{i=1}^n f^{(i)}(x) 
\end{align*}
using the following update
\begin{align}
	g_k &= \frac{1}{n}\sum_{i=1}^n \nabla f^{(i)}(x_{k-\tau_k^{(i)}}),\label{eq:PIAGgkcompute}\\
	x_{k+1} &= \operatorname{prox}_{\gamma_k r}(x_k-\gamma_k g_k),\label{eq:PIAGxupdate}
\end{align}
where $k$ is the iteration index, $\tau_k^{(i)}\in [0,k]$ is the delay of the gradient $\nabla f^{(i)}$ at iteration $k$, and $\gamma_k\ge 0$ is the step-size. The update \eqref{eq:PIAGgkcompute}--\eqref{eq:PIAGxupdate} is often implemented in the parameter server architecture \cite{LiM13}\deleted{including one master and $n$ workers}, where each worker $i$ computes $\nabla f^{(i)}(x_{k-\tau_k^{(i)}})$ and the master aggregates all the \added{most recent} local gradients to form (\ref{eq:PIAGgkcompute}) and updates the iterate using \eqref{eq:PIAGxupdate}. The implementation of PIAG is detailed in Algorithm \ref{alg:PIAG}.

\begin{algorithm}[tb]
	\caption{PIAG \cite{aytekin16,vanli2018global}}
	\label{alg:PIAG}
	\begin{algorithmic}[1]
		\STATE {\bfseries Input}: initial iterate $x_0$, number of iteration $k_{\max}\in\N$.
		\STATE {\bfseries Initialization:}
		\STATE The master sets $k\leftarrow0$, $g^{(i)}\leftarrow\nabla f^{(i)}(x_0)$ $\forall i\in [n]$, and $g_0\leftarrow\frac{1}{n}\sum_{i=1}^n \nabla f^{(i)}(x_0)$.
		\WHILE{$k\le  k_{\max}$: each worker $i\in [n]$ \emph{asynchronously} and \emph{continuously}}
		\STATE receive $x_k$ from the master.
		\STATE compute $\nabla f^{(i)}(x_k)$.
		\STATE send $\nabla f^{(i)}(x_k)$ to the master.
		\ENDWHILE
		\WHILE{$k\le  k_{\max}$: the master}
		\STATE Wait until a set ${\mathcal R}$ of workers return.
		\FOR{all $w\in {\mathcal R}$}
		\STATE update $g^{(w)} \leftarrow  \nabla f^{(w)}(x_l)$.
		\ENDFOR
		\STATE set $g_k \leftarrow \frac{1}{n}\sum_{i=1}^n g^{(i)}$.
		\STATE determine the step-size $\gamma_k$.
		\STATE update $x_{k+1}\leftarrow\operatorname{prox}_{\gamma_kr}(x_k-\gamma_k g_k)$.
		\STATE set $k\leftarrow k+1$.
		\FOR{all $w\in {\mathcal R}$}
		\STATE send $x_{k}$ to worker $w$.
		\ENDFOR
		\ENDWHILE
	\end{algorithmic}
\end{algorithm}

\subsection{Async-BCD}\label{ssec:problembarriertran}

Suppose that the non-smooth function $r$ in problem \eqref{eq:prob} is separable, i.e., for a partition $x=(x^{(1)}, \ldots, x^{(m)})$ with $x^{(i)}\in\mathbb{R}^{d^{(i)}}$ and $\sum_{i=1}^m d^{(i)}=d$, it holds that $r(x) = \sum_{i=1}^m r^{(i)}(x^{(i)})$ $\forall x\in\R^d$.
When the dimension $d$ of $x$ is large, one \replaced{attractive}{typical} method for solving \eqref{eq:prob} is the block-coordinate descent (BCD) method: at each $k\in\mathbb{N}_0$, randomly choose $j\in [m]$ and execute the update
\begin{equation*}
	x_{k+1}^{(j)} = \prox_{\gamma_k r^{(j)}}(x_{k}^{(j)}-\gamma_k\nabla_j f(x_k)),
\end{equation*}
where \deleted{$k\in\N_0$ is the iteration index,}  $\nabla_j f(\cdot)$ is the partial gradient of $f$ with respect to the $j$th block $x^{(j)}$ and 
$\gamma_k\ge 0$ is the step-size.
Async-BCD implements BCD using multiple \replaced{processors in a}{servers in the} shared memory setting \cite{Peng16}\replaced{. The}{: the} decision vector is stored in shared memory and at each iteration $k$, one worker $i_k\in [n]$ updates
\begin{align}\label{eq:asyncbcd}
	x_{k+1}^{(j)} = \prox_{\gamma_k r^{(j)}}(x_{k}^{(j)}-\gamma_k\nabla_j f(x_{k-\tau_k})).
\end{align}
Here, $x_{k-\tau_k}$ is the decision vector that worker $i_k$ has read from shared memory and based its partial gradient computation on. The delay $\tau_k$ measures the number of updates that other processors have performed between the read and write operations of worker $i_k$. The block index $j$ is drawn by $i_k$ uniformly at random at time $k-\tau_k$. Algorithm \ref{alg:asyncbcd} details the implementation of Async-BCD.

\begin{algorithm}[tb]
	\caption{Async-BCD}
	\label{alg:asyncbcd}
	\begin{algorithmic}[1]
		\STATE {\bfseries Setup:} initial iterate $x_0$, number of iteration $k_{\max}\in\N$.
		\WHILE{$k\le  k_{\max}$: each worker $i\in [n]$ \emph{asynchronously} and \emph{continuously}}
		\STATE sample $j\in [m]$ uniformly at random.
		\STATE compute $\nabla_j f(\hat{x}_k)$ based on $\hat{x}_k$ read at time $k-\tau_k$.
		\STATE determine the step-size $\gamma_k$.
		\STATE compute $x_{k+1}^{(j)}$ by \eqref{eq:asyncbcd}.
		\STATE write on the shared memory.
		\STATE set $k\leftarrow k+1$.
		\STATE read $x_k$ from the shared memory.
		\ENDWHILE
	\end{algorithmic}
\end{algorithm}

\section{Main Result}\label{sec:result}

\replaced{In this section, we will derive convergence results for PIAG and Async-BCD under a totally asynchronous delay model. In our setting, the totally asynchronous model of Bertsekas and Tsitsiklis~\cite{bertsekas2015parallel} would allow the delays to grow unbounded, as long as no processor ceases to update and}{
This section derives convergence results for PIAG and Async-BCD under total asynchrony \cite{bertsekas2015parallel} which allows the delays to be unbounded but requires}
\begin{equation}\label{eq:asyeq}
    \lim_{k\rightarrow+\infty} k-\tau_k = +\infty,
\end{equation}
where $\tau_k=\max_{i\in [n]} \tau_k^{(i)}$ for PIAG. Convergence \emph{rate} results under this model are unlikely since \replaced{it does not impose any strict bound on how quickly the delays can grow. We thus focus on a particular model of asynchrony that satisfies (\ref{eq:asyeq}).}{it assumes nothing on growing speed of delays. We instead consider a particular asynchrony model which specializes \eqref{eq:asyeq}.}
\begin{assumption}\label{asm:asynchrony}
	For some $a\in(0,1)$, $b\in [0,1]$, and $c\ge 0$, \begin{equation*}
	    \tau_k\le \min(k,ak^b+c),~\forall k\in\N_0.
	\end{equation*}
\end{assumption}
Clearly, Assumption \ref{asm:asynchrony} guarantees \eqref{eq:asyeq}. Moreover, by varying $b\in[0,1]$ we can move seamlessly between several interesting and important models of asynchrony. In particular,
\begin{itemize}
    \item $b=0$ yields bounded delays: $\tau_k\le \min(k,a+c)$.
    \item $b=1/2$ is sublinear growth: $\tau_k\le \min(k,a\sqrt{k}+c)$.
    \item $b=1$ is a linear delay bound: $\tau_k\le \min(k,ak+c)$.
\end{itemize}
The linearly growing delay bound is the largest polynomial growth we can have because when $b>1$, the total asynchrony condition \eqref{eq:asyeq} no longer holds. Our analysis may be extended to other delay models, e.g., those in \cite{Feyzmahdavian21} and \cite{chen2007global}.

\subsection{PIAG}\label{ssec:PIAGconv}
Let us first present the convergence rate guarantees for PIAG under the delays characterized by Assumption~\ref{asm:asynchrony}. For convenient notation, we introduce
\begin{equation*}
	\phi(k) = 
	\begin{cases}
		k^{1-b}, & b\in [0,1),\\
		\ln k, & b=1,
	\end{cases}\quad\forall k\in\N_0,
\end{equation*}
where $b$ is the delay bound parameter in Assumption \ref{asm:asynchrony}.
\begin{theorem}\label{theo:piag}
	Suppose that each  $f^{(i)}$ is $L_i$-smooth, $r$ is convex and closed, $P^\star:=\min_x P(x)>-\infty$, and Assumption \ref{asm:asynchrony} holds. Define $L=\sqrt{({1}/{n})\sum_{i=1}^n L_i^2}$. Let $\{x_k\}$ be generated by the PIAG algorithm with
	\begin{equation}\label{eq:PIAGstep}
		\gamma_k =\frac{h}{L(a(\frac{k+c}{1-a})^b+c+1)},~\forall k\in\mathbb{N}_0,
	\end{equation}
	where $h\in(0,1)$. Then,
	\begin{enumerate}[(i)]
		\item There exist $\xi_k\in \partial r(x_k)$ $\forall k\in\N_0$ such that
			\begin{equation*}
			    \min_{t\le k} \|\nabla f(x_t)+\xi_t\|^2=O(1/\phi(k)).
			\end{equation*}
		\item If each $f^{(i)}$ is convex, then
		\begin{equation*}
    		P(x_k)-P^\star=O(1/\phi(k)).
		\end{equation*}
		\item If $P$ satisfies the proximal PL-condition \eqref{eq:proximalPLcond}, then \begin{equation*}
		   P(x_k)-P^\star=O(\lambda^{\phi(k)})
		\end{equation*}
		for some $\lambda\in (0,1)$.
	\end{enumerate}
\end{theorem}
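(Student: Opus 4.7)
The plan is to extend the standard analysis of proximal gradient methods to the totally asynchronous setting, with the specific step-size schedule in \eqref{eq:PIAGstep} engineered precisely to absorb the delay-induced error. I would begin by deriving a one-step descent-like inequality. From the optimality of the proximal step and $L_f$-smoothness of $f$ (where $L_f \le (1/n)\sum_i L_i \le L$ by Jensen's inequality), combined with Young's inequality, one obtains
\begin{equation*}
P(x_{k+1}) \le P(x_k) - \Bigl(\tfrac{1}{4\gamma_k} - \tfrac{L}{2}\Bigr)\|x_{k+1}-x_k\|^2 + \gamma_k \|g_k - \nabla f(x_k)\|^2.
\end{equation*}
The delay-induced noise is then bounded via smoothness of each $f^{(i)}$ and Cauchy--Schwarz applied to the telescoping identity $x_k - x_{k-\tau_k^{(i)}} = \sum_{t=k-\tau_k^{(i)}}^{k-1}(x_{t+1}-x_t)$, yielding
\begin{equation*}
\|g_k - \nabla f(x_k)\|^2 \le L^2 \tau_k \sum_{t=k-\tau_k}^{k-1}\|x_{t+1}-x_t\|^2.
\end{equation*}

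Next, I would sum the descent inequality over $k=0,\ldots,K$ and swap the order of summation in the resulting double sum. This produces a weighted bound whose coefficients on $\|x_{k+1}-x_k\|^2$ take the form $\tfrac{1}{4\gamma_k} - \tfrac{L}{2} - L^2 \sum_{\ell : k+1 \le \ell \le k+\tau_\ell} \gamma_\ell \tau_\ell$. The form of \eqref{eq:PIAGstep} together with $\tau_\ell \le a\ell^b + c$ is chosen precisely so that the inner sum is uniformly bounded by a constant strictly smaller than $\tfrac{1}{4h} - \tfrac{L_f}{2L}$, keeping the coefficient at least a positive multiple of $1/\gamma_k$. The self-referential bound $\ell - \tau_\ell \ge (1-a)\ell - c$, which dictates the range of valid $\ell$ for a given $k$, is what forces the factor $1/(1-a)$ inside the step-size. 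The outcome is a uniform bound $\sum_{k=0}^{\infty} \|x_{k+1}-x_k\|^2 / \gamma_k \le C$.

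Each part of the theorem then follows from this central estimate. For (i), set $\xi_{k+1} := (x_k - x_{k+1})/\gamma_k - g_k \in \partial r(x_{k+1})$ and bound $\|\nabla f(x_k)+\xi_k\|^2$ in terms of $\|x_{k+1}-x_k\|^2/\gamma_k^2$ and the delay noise; combining with $\sum_{t=0}^k \gamma_t = \Theta(\phi(k))$ and a pigeonhole argument delivers the $O(1/\phi(k))$ rate on the running minimum. For (ii), convexity of $f$ upgrades the descent inequality to $P(x_k)-P^\star \le \tfrac{1}{2\gamma_k}(\|x_k-x^\star\|^2-\|x_{k+1}-x^\star\|^2) + \text{delay term}$, and telescoping the same way yields $P(x_k) - P^\star = O(1/\phi(k))$. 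For (iii), the proximal PL condition converts the descent into a recursion $P(x_{k+1}) - P^\star \le (1 - c\gamma_k)(P(x_k)-P^\star) + \text{delay error}$; once the delay error is folded into a slightly smaller contraction factor, iteration yields $\prod_{t=0}^k (1 - c'\gamma_t) \le \exp(-c' \sum_{t=0}^k \gamma_t) \le \lambda^{\phi(k)}$ for some $\lambda \in (0,1)$.

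The main obstacle is showing that the inner sum $\sum_{\ell : k+1 \le \ell \le k+\tau_\ell} \gamma_\ell \tau_\ell$ is uniformly bounded and small enough to preserve a strictly positive descent coefficient; verifying this requires careful arithmetic with the specific constants in \eqref{eq:PIAGstep}. A secondary difficulty, specific to part (iii), is that the depth of the delay grows with $k$, so a fixed-length Lyapunov function of the bounded-delay analysis no longer suffices. I would expect to either propagate the delay error through a history-weighted Lyapunov function, or to directly estimate the delayed quantities $P(x_{k-\tau_k}) - P^\star$ by iterating the contractive inequality and exploiting that $\sum_{t=k-\tau_k}^{k-1} \gamma_t$ remains bounded under Assumption~\ref{asm:asynchrony}, so that contraction factors over a window of length $\tau_k$ are of constant order.
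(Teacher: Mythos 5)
Your proposal is mathematically sound in outline, but it takes a genuinely different and much longer route than the paper. The paper does not re-derive the descent/error-accumulation analysis at all: it imports it wholesale from \cite[Theorem 2]{Wu22} (restated as Theorem~\ref{theo:PIAGinICML} in the appendix), which already delivers all three conclusions for \emph{any} step-size sequence satisfying the window-sum condition $\sum_{t=k-\tau_k}^{k}\gamma_t\le h/L$. The paper's actual proof then consists of only two computations: (a) verifying that \eqref{eq:PIAGstep} satisfies this condition, using $t\ge k-\tau_k\ge(1-a)k-c$ to show $\gamma_t\le h/(L(ak^b+c+1))$ on the window and hence $(\tau_k+1)\gamma_{k-\tau_k}\le h/L$; and (b) lower-bounding $\sum_{t=0}^{k-1}\gamma_t$ by an integral comparison to get $1/\sum_{t=0}^{k-1}\gamma_t=O(1/\phi(k))$, after which (i)--(iii) follow by substitution. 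Both of these steps are present in your sketch (your self-referential bound $\ell-\tau_\ell\ge(1-a)\ell-c$ and your claim $\sum_{t=0}^k\gamma_t=\Theta(\phi(k))$), and your identification of the window sum of step-sizes as the controlling quantity is exactly the hypothesis of the cited theorem, so your instincts match the paper's mechanism precisely. What your route buys is self-containedness; what it costs is that you must actually carry out the hardest part, and there your write-up is thinnest: part (iii) is left as a choice between two candidate strategies (history-weighted Lyapunov function versus iterating the contraction over a delay window of bounded step-size mass) rather than a worked argument, and this is precisely the nontrivial content that the paper outsources to \cite{Wu22}. A secondary caveat is that your bookkeeping of the descent coefficient (absorbing $L^2\sum_{\ell}\gamma_\ell\tau_\ell$ into $\tfrac{1}{4\gamma_k}-\tfrac{L}{2}$) will likely force a more restrictive range of $h$ than the stated $h\in(0,1)$ unless the Young and Cauchy--Schwarz constants are sharpened; this does not affect the order of the rates, which is all the theorem claims.
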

\begin{proof}
	See Appendix \ref{ssec:proofpiag}.
\end{proof}
Table \ref{tab:relation} extracts the relationship between the delay bound, admissible step-size, and convergence rates in Theorem \ref{theo:piag}.
\begin{table}[!htb]
    \centering
    \begin{tabular}{c|c|c|c}
        \hline
        delay bound & step-size & rate  & rate\\
        & & (non-convex, convex) & (proximal PL)\\
        \hline
        $O(k^b)$, $b<1$ & $O(k^{-b})$ & $O(1/k^{1-b})$ & $O(\lambda^{k^{1-b}})$\\
        \hline
        $O(k^b)$, $b=1$ & $O(1/k)$ & $O(1/\ln k)$ & $O(1/k)$\\
        \hline
    \end{tabular}
    \caption{asynchrony, step-size, and convergence rate.}
    \label{tab:relation}
\end{table}

Note that when $b=0$, which corresponds to bounded delays, the convergence rates in Table \ref{tab:relation} \replaced{match}{reach} those of PIAG under partial asynchrony \cite{aytekin16,vanli2018global,Sun19,Wu22} and those of gradient descent \cite{nesterov2003introductory,karimi16}, i.e., $O(1/k)$ for non-convex and convex objective functions and linear convergence if the proximal PL-condition holds. The table quantifies how large delays limit the admissible step-sizes and deteriorate the convergence rates, which agrees with intuition.
\subsection{Async-BCD}\label{ssec:Asyncbcdconv}
Based on the block-wise smoothness assumption \eqref{eq:coordinatesmooth}, the following theorem establishes \replaced{convergence rates for}{the convergence of} Async-BCD in solving problem \eqref{eq:prob}.

\begin{theorem}\label{theo:coor}
	Suppose that $f$ is $\hat{L}$-block-wise smooth with respect to the partition $x=(x^{(1)}, \ldots, x^{(m)})$, each $r^{(i)}$ is convex  and closed,  $P^\star:=\min_x P(x)>-\infty$, and \added{that} Assumption~\ref{asm:asynchrony} holds. Let $\{x_k\}$ be generated by the Async-BCD algorithm with
	\begin{equation}\label{eq:asyncbcdstep}
		\gamma_k =\frac{h}{\hat{L}(a(\frac{k+c}{1-a})^b+c+1)},~\forall k\in\mathbb{N}_0,
	\end{equation}
	where $h\in(0,1)$. Then,
	\begin{equation*}
		\min_{t\le k} E[\|\tilde{\nabla} P(x_t)\|^2]=O(1/\phi(k)),
	\end{equation*}
	where $\tilde{\nabla} P(x_t)=\hat{L}(\operatorname{prox}_{\frac{1}{\hat{L}} r}(x_t-\frac{1}{\hat{L}} \nabla f(x_t))-x_t)$.
	\end{theorem}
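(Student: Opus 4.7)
The plan is to derive a stochastic descent inequality for $P(x_k)$ along the trajectory, absorb the delay error through the telescoping behavior of $\|x_{k+1}-x_k\|$, and then invoke the specific form of $\gamma_k$ in (\ref{eq:asyncbcdstep}) to obtain the rate. First I would fix the iteration $k$ and, conditioned on all randomness up to time $k$, apply the block-wise smoothness (\ref{eq:coordinatesmooth}) to the single-block update (\ref{eq:asyncbcd}). Using the standard proximal three-point inequality together with $\hat{L}$-block-wise smoothness and the optimality of the prox step, one obtains
\begin{equation*}
P(x_{k+1}) \le P(x_k) + \langle \nabla_j f(x_{k-\tau_k}) - \nabla_j f(x_k),\, x_{k+1}^{(j)}-x_k^{(j)}\rangle - \left(\tfrac{1}{\gamma_k}-\tfrac{\hat{L}}{2}\right)\|x_{k+1}^{(j)}-x_k^{(j)}\|^2.
\end{equation*}
Because only one block changes per iteration, block-wise smoothness gives $\|\nabla_j f(x_k)-\nabla_j f(x_{k-\tau_k})\| \le \hat{L}\sum_{l=k-\tau_k}^{k-1}\|x_{l+1}-x_l\|$, so Young's inequality lets me split off the delay cross term and express everything in terms of $\|x_{l+1}-x_l\|^2$ for $l$ near $k$.

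Next, I would take expectation over the uniformly-random block index $j$, which replaces $\|x_{k+1}^{(j)}-x_k^{(j)}\|^2$ by $(1/m)\|x_{k+1}-x_k\|^2$ (identifying $x_{k+1}$ with the hypothetical full-block update) and, via the standard identification used in Async-BCD analyses, introduces the proximal gradient mapping $\tilde{\nabla} P(x_k)$ on the right-hand side with coefficient proportional to $\gamma_k/m$. The resulting per-step inequality has the shape
\begin{equation*}
E[P(x_{k+1})] \le E[P(x_k)] - \alpha \gamma_k E[\|\tilde{\nabla} P(x_k)\|^2] + \beta \gamma_k \hat{L}^2 \sum_{l=k-\tau_k}^{k-1} \gamma_l^2 E[\|\tilde{\nabla} P(x_l)\|^2],
\end{equation*}
with some absolute constants $\alpha,\beta>0$ coming from the Young constants and from $h\in(0,1)$.

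The heart of the argument is then a Lyapunov/summation step that dominates the delay term. Summing from $0$ to $k$, swapping the order of summation in the double sum and using $\tau_l \le a l^b + c$ from Assumption~\ref{asm:asynchrony}, the delay term is bounded by a multiple of $\sum_{l=0}^{k} \gamma_l^2 (a l^b+c+1) E[\|\tilde{\nabla} P(x_l)\|^2]$. With the choice (\ref{eq:asyncbcdstep}), one checks that $\gamma_l \hat{L}(a l^b+c+1)\le h<1$, so each term in the delay sum is absorbed into $\alpha \gamma_l E[\|\tilde{\nabla} P(x_l)\|^2]$, leaving a strictly positive net coefficient $\alpha' \gamma_l$. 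This is exactly the same absorption mechanism that must be used for PIAG in Theorem~\ref{theo:piag}, and I would expect this bookkeeping to be the main obstacle, since the constants must be made explicit enough to guarantee $\alpha'>0$ uniformly in $k$.

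Finally, rearranging yields $\sum_{t=0}^{k} \gamma_t\, E[\|\tilde{\nabla} P(x_t)\|^2] \le (P(x_0)-P^\star)/\alpha'$, so $\min_{t\le k} E[\|\tilde{\nabla} P(x_t)\|^2] \le (P(x_0)-P^\star)/(\alpha' \sum_{t=0}^{k}\gamma_t)$. It remains to evaluate $\sum_{t=0}^{k}\gamma_t$ with $\gamma_t \asymp 1/(t^b+1)$: for $b\in[0,1)$ this sum grows like $k^{1-b}$, and for $b=1$ like $\ln k$. In both cases the bound is $O(1/\phi(k))$, which is the claim.
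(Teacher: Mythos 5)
Your overall strategy is sound, but you should know that the paper does not prove the descent--absorption machinery at all: it imports it wholesale as \cite[Theorem~3]{Wu22}, which states that if $\sum_{t=k-\tau_k}^{k}\gamma_t\le h/\hat{L}$ for all $k$, then $\sum_{k=0}^{\infty}\gamma_k E[\|\tilde{\nabla}P(x_k)\|^2]\le 4m(P(x_0)-P^\star)/(1-h)$. The paper's proof therefore consists only of two computations that you also perform at the end of your argument: (a) verifying that the step-size \eqref{eq:asyncbcdstep} satisfies the windowed condition, using $t\ge k-\tau_k\ge(1-a)k-c$ to get $\gamma_t\le h/(\hat{L}(ak^b+c+1))$ and hence $\sum_{t=k-\tau_k}^{k}\gamma_t\le(\tau_k+1)h/(\hat{L}(ak^b+c+1))\le h/\hat{L}$; and (b) the integral comparison showing $\sum_{t=0}^{k-1}\gamma_t=\Omega(\phi(k))$. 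Your absorption condition $\gamma_l\hat{L}(al^b+c+1)\le h$ is exactly the same mechanism as (a), and your evaluation of $\sum_t\gamma_t$ is (b), so the load-bearing parts coincide; the difference is that you attempt to re-derive the imported lemma from scratch.

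In that re-derivation two steps are genuinely under-specified and are where the real work lies. First, the ``standard identification'' replacing $E_j[\|x_{k+1}^{(j)}-x_k^{(j)}\|^2]$ by a quantity proportional to $\gamma_k^2\|\tilde{\nabla}P(x_k)\|^2$ is not immediate: the actual update is $\operatorname{prox}_{\gamma_k r^{(j)}}(x_k^{(j)}-\gamma_k\nabla_j f(x_{k-\tau_k}))$, which differs from the mapping defining $\tilde{\nabla}P(x_k)$ both in the step size ($\gamma_k$ versus $1/\hat{L}$) and in the point at which the gradient is evaluated ($x_{k-\tau_k}$ versus $x_k$); relating the two produces additional error terms that must themselves be absorbed, and the cleaner route is to carry $\|x_{l+1}-x_l\|^2$ through the whole summation and only convert to $\tilde{\nabla}P$ at the very end. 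Second, your net coefficient $\alpha'$ must be shown positive for \emph{every} $h\in(0,1)$, not just for $h$ small; with your Young-inequality constants the delay contribution at index $l$ scales like $\beta h^2\gamma_l$ against a descent term $\alpha\gamma_l$ with $\alpha$ itself degrading as $h\to 1$, so the uniformity in $h$ is not automatic. Neither issue is fatal --- this is essentially the content of the cited Theorem~3 of \cite{Wu22} --- but as written your sketch leaves the hardest part of the argument at the level of an assertion, whereas everything you do carefully (the step-size verification and the growth of $\sum_t\gamma_t$) is exactly what the paper's short proof consists of.
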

	\begin{proof}
		See Appendix \ref{ssec:proofasyncbcd}.
	\end{proof}
	In Theorem \ref{theo:coor}, $\tilde{\nabla} P(x)=\mathbf{0}$ if and only if $\mathbf{0}\in \partial P(x)$, i.e., $x$ is a stationary point of problem \eqref{eq:prob}. When $b=0$, our convergence rate is of the same order compared to Async-BCD under partial asynchrony \cite{Davis16,Sun17}. The relationship between delay bound, step-size, and convergence rate of Async-BCD \replaced{is summarized in}{ can also be illustrated by} Table~\ref{tab:relation} for non-convex objective functions. Again, a larger delay requires smaller step-sizes and leads to a slower convergence.

\subsection{Optimal convergence rate}

\replaced{The next result establishes that the convergence rates in the preceding theorems are optimal under  Assumption~\ref{asm:asynchrony}, and not a consequence of the particular step-size policies.}{
This subsection shows that the rates in Theorems \ref{theo:piag}--\ref{theo:coor} are optimal for both PIAG and Async-BCD under the asynchrony model defined by Assumption \ref{asm:asynchrony}.}
\begin{theorem}\label{theo:convergencelimit}
    \replaced{The convergence rates for PIAG and Async-BCD derived in Theorems~\ref{theo:piag}--\ref{theo:coor} are order-optimal.}{In terms of order, the rates in Theorems \ref{theo:piag}--\ref{theo:coor} are optimal for PIAG and Async-BCD, respectively.}
\end{theorem}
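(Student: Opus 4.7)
The plan is to prove order-optimality by constructing, for each case of Theorems~\ref{theo:piag}--\ref{theo:coor}, a concrete instance of problem~\eqref{eq:prob} and a delay sequence saturating Assumption~\ref{asm:asynchrony} on which neither PIAG nor Async-BCD can improve the stated rate for any choice of step-size $\{\gamma_k\}$. The core work happens on the PIAG side; Async-BCD with $m=n=1$ collapses to the same delayed proximal-gradient recursion, and the expectation becomes trivial.

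For the proximal-PL case, Theorem~\ref{theo:piag}(iii), I would take $n=1$, $r\equiv 0$, and $f(x)=\tfrac{L}{2}x^2$. PIAG then becomes the scalar delayed linear iteration $x_{k+1}=x_k-\gamma_k L\,x_{k-\tau_k}$. Fix the tight delay pattern $\tau_k=\min(k,\lfloor ak^b+c\rfloor)$, which saturates Assumption~\ref{asm:asynchrony}. The argument is a stability/progress dichotomy. Stability: if $\gamma_k L\tau_k\ge\rho>1$ on an infinite set of indices, one can construct an initial window $(x_{-\tau_0},\ldots,x_0)$ whose sign pattern is matched to $\tau_k$ so that $|x_{k+1}|\ge(1+\delta)|x_k|$ along that subsequence (alternatively, a characteristic-root argument for the linear delay equation), ruling out convergence; hence any convergent step-size policy obeys $\gamma_k=O(1/(L\tau_k))=O(k^{-b})$. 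Progress: under this bound the cross-term $\gamma_k L|x_{k-\tau_k}-x_k|$ is dominated by $|x_k|$, yielding $|x_{k+1}|\ge(1-C\gamma_k L)|x_k|$ for a $k$-independent constant $C$, and telescoping gives $x_k^2\ge\exp(-2CL\sum_{j<k}\gamma_j)$. An elementary calculation shows $\sum_{j<k}\gamma_j=\Theta(\phi(k))$, so $P(x_k)-P^\star=\Omega(\lambda^{\phi(k)})$ for some $\lambda\in(0,1)$, matching the upper bound.

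For Theorem~\ref{theo:piag}(i)--(ii), strong convexity must be absent so that $O(1/\phi(k))$ is tight rather than the faster $O(\lambda^{\phi(k)})$. I would use a non-strongly-convex convex quadratic instance in the spirit of Nesterov's worst function, embedded so that every block has identical curvature, together with the same delay sequence. The stability half still forces $\gamma_k=O(k^{-b})$, and the classical lower-bound technique for first-order methods with prescribed step-sizes delivers $P(x_k)-P^\star=\Omega\bigl(1/\sum_{j<k}\gamma_j\bigr)=\Omega(1/\phi(k))$; the non-convex conclusion follows since every convex function is non-convex. Async-BCD inherits both bounds through the $m=n=1$ reduction.

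The main obstacle is the stability half of the dichotomy: making rigorous, and uniform in the step-size policy, the claim that $\gamma_k L\tau_k\not\lesssim 1$ precludes $x_k\to 0$ from every nonzero initialization. The cleanest route is to exhibit a worst-case initial window for which resonance between $x_k$ and $x_{k-\tau_k}$ forces persistent amplification on a subsequence; once that structural fact is in place, the remainder reduces to the elementary evaluation of $\sum\gamma_j$ under the delay profile $\tau_k\sim k^b$ and standard lower bounds for first-order methods on quadratics, yielding the matching $\Omega$-bounds and thereby order-optimality.
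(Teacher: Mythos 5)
Your overall strategy (reduce both methods to the delayed gradient iteration \eqref{eq:asyncgd}, exhibit a delay sequence saturating Assumption~\ref{asm:asynchrony}, and show no step-size policy can beat $\phi(k)$) is the right shape, but the execution has a gap that I do not think can be repaired in the form you propose. The central problem is that you fix a \emph{single, known} instance ($f(x)=\tfrac{L}{2}x^2$, $r\equiv 0$) and then claim a lower bound valid for \emph{every} step-size policy. On a known scalar quadratic the trajectory is linear in $x_0$, $x_k=c_kx_0$ with $c_{k+1}=c_k-\gamma_kLc_{k-\tau_k}$, and a policy tailored to this instance (e.g.\ $\gamma_0=1/L$, then $\gamma_k=0$) reaches the minimizer exactly after one step. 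So your claimed progress bound $x_k^2\ge\exp(-2CL\sum_{j<k}\gamma_j)$ is false as a statement about all policies on this instance; a lower bound of this kind must quantify over a \emph{class} of functions, with the hard function chosen adversarially against the method. The stability half of your dichotomy is also fragile: $\gamma_kL\tau_k\ge\rho>1$ on a subsequence need not cause divergence if the large steps occur when the stale iterate $x_{k-\tau_k}$ is small (or zero), and the characteristic-root machinery you invoke is for constant-coefficient delay equations, not time-varying $\gamma_k,\tau_k$. You flag this half as the main obstacle yourself; I would say it is not just an obstacle but the wrong mechanism, because the matching upper bounds in Theorems~\ref{theo:piag}--\ref{theo:coor} already use $\gamma_k=\Theta(k^{-b})$, so the lower bound does not actually need to \emph{force} that step-size --- it needs to show that even unconstrained step-sizes cannot help.

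The paper's proof sidesteps both difficulties with a different delay sequence: $\tau_k=k-T_t$ on each epoch $[T_t,T_{t+1})$, with $T_{t+1}$ defined by \eqref{eq:Gammatplus1} as the largest admissible epoch length. Under this delay every gradient evaluated during an epoch is taken at the \emph{same} stale point $x_{T_t}$, so the iteration collapses to $x_{k+1}=x_{T_t}-(\sum_{\ell=T_t}^k\gamma_\ell)\nabla f(x_{T_t})$: regardless of the step-size policy, after $k$ iterations the method has queried the gradient oracle at only $O(\phi(k))$ distinct points (this is \eqref{eq:maximaliter}). The lower bound then follows immediately from the known worst-case bounds for first-order methods with a given budget of gradient evaluations over the class of $L$-smooth (resp.\ smooth plus proximal-PL) functions \cite{drori2014performance,karimi16} --- no stability analysis and no per-instance progress estimate are needed. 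If you want to salvage your route, the minimal fix is to replace the single quadratic by a resisting-oracle argument over a function class and to drop the stability half entirely; but at that point you have essentially reconstructed the paper's information-theoretic argument with a less convenient delay sequence.
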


\subsubsection{Proof of Theorem \ref{theo:convergencelimit}}

\replaced{We prove our claim by constructing an objective function and a delay sequence that satisfy the assumptions of the preceding theorems, and are such that the proposed rates are optimal.
}{
Our main idea is to prove that for an example satisfying the conditions in Theorems \ref{theo:piag}--\ref{theo:coor}, the two methods can only be guaranteed to converge at the rates in Theorems \ref{theo:piag}-\ref{theo:coor}, respectively.}

Let $r\equiv 0$ and let $f$ be $L$-smooth for some $L>0$. Then, both PIAG and Async-BCD reduce to
\begin{equation}\label{eq:asyncgd}
	x_{k+1} = x_k - \gamma_k \nabla f(x_{k-\tau_k}).
\end{equation}
Now, consider the delay sequence
\begin{equation}\label{eq:delayexample}
    \tau_k = k - T_t, \quad \text{if } k\in [T_t, T_{t+1}),
\end{equation}
where $\{T_t\}$ is defined by $T_0=0$ and 
\begin{equation}\label{eq:Gammatplus1}
    T_{t+1} = \max\{\kappa \in\N_0: \kappa - (a\kappa^b+c)\le T_t\}+1
\end{equation}
for some $a\in(0,1)$, $b\in[0,1]$, and $c\ge 0$. In this way, for any $k\in [T_t, T_{t+1})$, it holds that $k-(ak^b+c)\leq T_t$. By (\ref{eq:delayexample}), $\tau_k\leq ak^b+c$ and $\tau_k\leq k$, so $\{\tau_k\}$ satisfies Assumption~\ref{asm:asynchrony}.

By substituting \eqref{eq:delayexample} into \eqref{eq:asyncgd}, we obtain
\begin{equation*}
	x_{k+1} = x_{T_t} - (\sum_{\ell =T_t}^k \gamma_\ell) \nabla f(x_{T_t}),~~\forall k\in[T_t, T_{t+1}).
\end{equation*}
This implies that $x_k$, $k\in\mathbb{N}$ is obtained by performing $\max\{t\in\N_0: T_t\le k-1\}+1$ steps of gradient descent starting from $x_0$. Moreover, we prove in Appendix \ref{ssec:prooflemmaseq} that
\begin{equation}\label{eq:maximaliter}
    \max\{t\in\N_0:\; T_t\le k-1\}+1=O(\phi(k)).
\end{equation}
The result now follows by observing that after $O(\phi(k))$ steps of gradient descent on a general $L$-smooth function or a general $L-$smooth and proximal PL function, we cannot obtain better order of convergence \cite{drori2014performance,karimi16} than those in Theorems \ref{theo:piag}--\ref{theo:coor}.
\section{Numerical Experiments}\label{sec:numericalexample}

	We demonstrate the theoretical results in Theorems \ref{theo:piag}--\ref{theo:coor} and \replaced{evaluate}{test} the practical performance of the two methods under total asynchrony \replaced{in simulations}{by simulation}. We consider \added{a binary} classification problem on the training data set of RCV1 \cite{lewis2004rcv1} using the regularized logistic regression model:
	\begin{align*}
	    f(x) &= \frac{1}{N}\sum_{i=1}^N\left(\log(1+e^{-p_i(q_i^Tx)})+\frac{\lambda_2}{2}\|x\|^2\right),\\
	    r(x) &= \lambda_1\|x\|_1,
	\end{align*}
	where $p_i$ is the feature of the $i$th sample, $q_i$ is the corresponding label, and $N$ is the number of samples. We use $(\lambda_1,\lambda_2)=(10^{-5},10^{-4})$ in all simulations.

	\subsection{PIAG}\label{ssec:PIAGsimu}
	
	We split the $N$ samples into $n=10$ batches and assign each batch to a single worker. We consider the following delay model: $\tau_0^{(i)}=0$ for all $i\in[n]$. For all $k\in\N$ and $i\in[n]$, if $\tau_{k-1}^{(i)}\le \min(k,ak^b+c)-1$, then $\tau_k^{(i)}=\tau_{k-1}^{(i)}+1$; Otherwise, $\tau_k^{(i)}$ is randomly drawn from $[\min(k,\lfloor ak^b+c\rfloor)]$. We use $a=0.1$ and $c=0$, and consider $b=0.2$, $0.6$ and $1$ to evaluate the effect of delays. Note that the constructed delay sequence satisfies Assumption \ref{asm:asynchrony}.
	
	We plot the objective error $P(x_k)-P^\star$ generated by PIAG in Fig \ref{fig:PIAGpractical}, and the theoretical bound $O(1/\phi(k))$ in Theorem \ref{theo:piag} for the convex objective functions in Fig \ref{fig:PIAGtheo}, where the exact value of the bound is obtained by substituting \eqref{eq:summationgammabound} into \eqref{eq:appendPIAGconvex}. Although the rate $O(\lambda^{\phi(k)})$ in Theorem \ref{theo:piag} also holds since the objective function satisfies the proximal PL-condition, it is quite slow because the parameter $\lambda_2$ is small.
	Observe from Fig \ref{fig:PIAGpractical} that PIAG tends to converge for all three $b$'s, and the convergence speed deteriorates as $b$ increases. These demonstrate Theorem \ref{theo:piag}. Through comparison between Fig \ref{fig:PIAGpractical}--\ref{fig:PIAGtheo} that are with different $y$-scale, although the theoretical bound is much larger than the practical objective error for all three values of $b$, their decreasing speed are similar.
	\begin{figure}[!htb]
		\centering
		\subfigure[practical]{
			\includegraphics[scale=0.8]{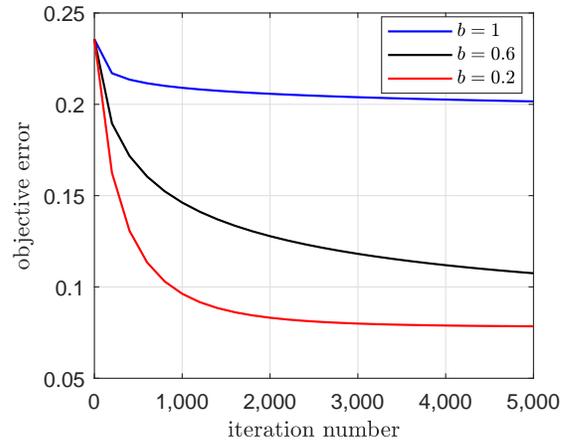}\label{fig:PIAGpractical}}
		\vfill
		\vspace{-0.25cm}
		\subfigure[theoretical]{
			\includegraphics[scale=0.8]{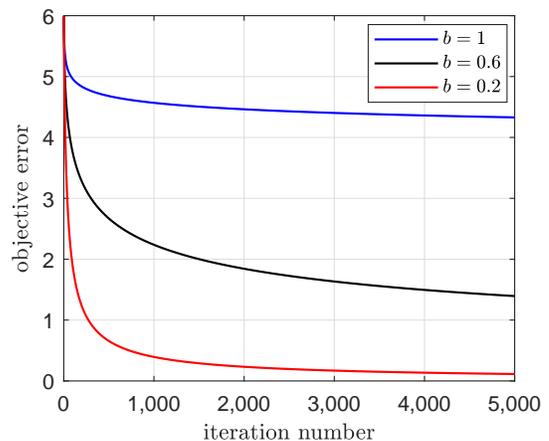}\label{fig:PIAGtheo}}
		\caption{Convergence of PIAG}
		\label{fig:PIAG}
	\end{figure}

	\subsection{Async-BCD}
	
	We use $n=8$ \replaced{processors}{servers} and split the \replaced{decision vector}{variable} $x$ evenly into $m=14$ blocks. We set $\tau_0=0$. For all $k\in\N$, $\tau_k = \tau_{k-1}+1$ if $\tau_{k-1}+1\le \min(k,\lfloor ak^b+c\rfloor)$ and is randomly drawn from $[\min(k,\lfloor ak^b+c\rfloor)]$ otherwise. Like above, we set $a=0.1$ and $c=0$, and consider $b=0.2$, $0.6$, and $1$. The resulting delay sequence satisfies Assumption \ref{asm:asynchrony}.

    Fig \ref{fig:bcd} plots the convergence of objective error $P(x_k)-P^\star$ generated by Async-BCD. We observe that for small value $0.2$ of $b$, the convergence to optimum is clear and fast and for larger values $0.6$ and $1$, the convergence to optimum is hard to observe which is normal because the delays corresponding to $b=0.6$ and $b=1$ increase too fast and coordinate-type methods often converge slowly in terms of iteration number.
	\begin{figure}
        \centering
        \includegraphics[scale=0.8]{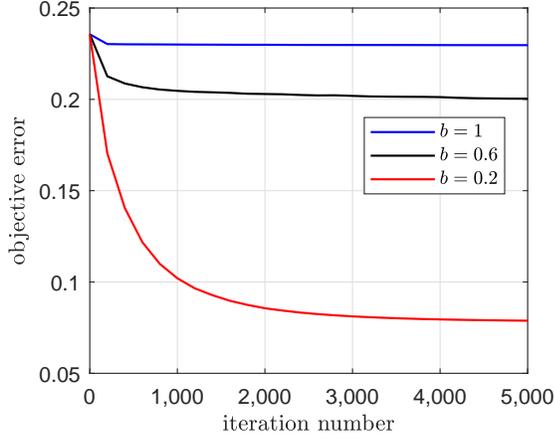}
        \caption{Convergence of Async-BCD}
        \label{fig:bcd}
    \end{figure}

\section{Conclusion}\label{sec:conclusion}

We have derived explicit convergence rates of PIAG and Async-BCD under a model of computation that allows for a broad range of totally asynchronous behaviours. The convergence rates are optimal in terms of the order of iteration index $k$ and reflect how asynchrony affects the convergence times of the algorithms. The theoretical results were validated in simulations. We believe that the proposed techniques apply also to other asynchronous optimization algorithms, but leave such studies for future work.

\appendix

\subsection{Proof of Theorem \ref{theo:piag}}\label{ssec:proofpiag}

The proof uses \cite[Theorem 2]{Wu22}.
	\begin{theorem}[\cite{Wu22}]\label{theo:PIAGinICML}
    Under the conditions in Theorem \ref{theo:piag}, if for some $h\in(0,1)$,
	\begin{equation}\label{eq:piaggeneralstep}
	    \sum_{t=k-\tau_k}^k \gamma_t\le \frac{h}{L},
	\end{equation}
	then
	\begin{enumerate}[(1)]
			\item There exist $\xi_k\in \partial r(x_k)$ $\forall k\in\mathbb{N}_0$ such that	\begin{equation*}
				\!\!\!\sum_{k=1}^\infty\!\gamma_{k-1}\|\nabla f(x_k)\!+\!\xi_k\|^2\!\le\!\frac{2(h^2\!-\!h\!+\!1)(P(x_0)\!-\!P^\star)}{1-h}.
			\end{equation*}
			\item If each $f^{(i)}$ is convex, then
			\begin{equation}\label{eq:appendPIAGconvex}
				P(x_k)-P^\star\le \frac{P(x_0)-P^\star+\frac{1}{2a_0}\|x_0-x^\star\|^2}{1+\frac{1}{a_0}\sum_{t=0}^{k-1}\gamma_t},
			\end{equation}
			where $a_0=\frac{h(h+1)}{L(1-h)}$.
			\item If $P$ satisfies the proximal PL-condition \eqref{eq:proximalPLcond}, then
			\begin{equation*}
				P(x_k)\!-\!P(x^\star)\!\le\! e^{-\frac{3\beta\sigma(1-\tilde{h})}{4(\tilde{h}^2-\tilde{h}+1)}\sum_{t=0}^{k-1}\gamma_t}(P(x_0)\!-\!P^\star),
			\end{equation*}
			where $\tilde{h} = \frac{1+h}{2}$ and $\beta = \min\left(1,\frac{1-h}{2h}\frac{L}{\sigma}\right)$.
		\end{enumerate}
	\end{theorem}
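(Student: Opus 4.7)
The plan is to treat the PIAG update as an inexact proximal gradient step, where the inexactness comes from using aggregated delayed gradients $g_k$ rather than $\nabla f(x_k)$. Define the error $e_k := g_k - \nabla f(x_k)$ and set $\xi_{k+1} := (x_k - \gamma_k g_k - x_{k+1})/\gamma_k$, so that $\xi_{k+1} \in \partial r(x_{k+1})$ by the prox optimality condition. Combining the $L$-smoothness of $f$ (with $L = \sqrt{(1/n)\sum_i L_i^2}$), the subgradient inequality for $r$, and a Young's inequality on the cross term $\langle e_k, x_{k+1}-x_k\rangle$, I would first derive a perturbed descent
\begin{equation*}
P(x_{k+1}) \le P(x_k) - \Bigl(\tfrac{1-\alpha}{2\gamma_k} - \tfrac{L}{2}\Bigr)\|x_{k+1}-x_k\|^2 + \tfrac{\gamma_k}{2\alpha}\|e_k\|^2,
\end{equation*}
with a tunable Young constant $\alpha>0$ to be optimized in terms of $h$ at the end.

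The second step is to bound $\|e_k\|$ by writing $e_k = \frac{1}{n}\sum_i[\nabla f^{(i)}(x_{k-\tau_k^{(i)}}) - \nabla f^{(i)}(x_k)]$, using $L_i$-smoothness componentwise and Cauchy--Schwarz (with the identity $L = \sqrt{(1/n)\sum L_i^2}$) to get $\|e_k\| \le L\sum_{t=k-\tau_k}^{k-1}\|x_{t+1}-x_t\|$, and then applying Cauchy--Schwarz a second time together with \eqref{eq:piaggeneralstep} to obtain
\begin{equation*}
\|e_k\|^2 \le L^2\Bigl(\sum_{t=k-\tau_k}^{k-1}\gamma_t\Bigr)\sum_{t=k-\tau_k}^{k-1}\tfrac{\|x_{t+1}-x_t\|^2}{\gamma_t} \le hL\sum_{t=k-\tau_k}^{k-1}\tfrac{\|x_{t+1}-x_t\|^2}{\gamma_t}.
\end{equation*}
Substituting into the descent inequality, summing over $k$, and swapping the order of summation converts the delay pollution into a clean factor of $h^2$ once \eqref{eq:piaggeneralstep} is used a second time (symmetrically, to show $\sum_{k\,:\,t\in[k-\tau_k,k-1]}\gamma_k \le h/L$). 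After optimizing $\alpha$, this yields $\sum_k (c(h)/\gamma_k)\|x_{k+1}-x_k\|^2 \le P(x_0)-P^\star$ with $c(h)$ proportional to $(1-h)/(h^2-h+1)$, which is exactly the factor appearing in conclusion~(1) once $\|x_{k+1}-x_k\|^2/\gamma_k^2$ is converted back into $\|\nabla f(x_k) + \xi_k\|^2$ via the definition of $\xi_{k+1}$ and one more use of the $\|e_k\|^2$ bound.

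For part~(2), I would additionally use convexity of $f$ to obtain $P(x_k) - P^\star \le \langle \nabla f(x_k)+\xi_k, x_k - x^\star\rangle$, plug this into the perturbed descent, and combine with the prox identity to derive a recursion of the shape $P(x_{k+1})-P^\star + \tfrac{1}{2a_0}\|x_{k+1}-x^\star\|^2 \le P(x_k)-P^\star + \tfrac{1}{2a_0}\|x_k-x^\star\|^2 - (\gamma_k/a_0)(P(x_{k+1})-P^\star)$. The constant $a_0 = h(h+1)/(L(1-h))$ is what makes the coefficients balance after the double-sum swap; applying a standard sequence lemma to the resulting recursion produces the stated $1/(1+\sum\gamma_t/a_0)$ rate. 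For part~(3), the proximal PL condition \eqref{eq:proximalPLcond} together with the perturbed descent yields $P(x_{k+1})-P^\star \le (1-c\gamma_k)(P(x_k)-P^\star)$ for a constant $c$; the conservative shrinkage $h \mapsto \tilde h = (1+h)/2$ in the intermediate inequalities is what leaves enough slack to absorb the delay error against the PL descent, and unrolling the contraction over $k$ produces the exponential bound with rate $3\beta\sigma(1-\tilde h)/(4(\tilde h^2 - \tilde h + 1))$.

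The main obstacle will be the bookkeeping in the double-sum swap, which is where the specific constants $(h^2-h+1)/(1-h)$ and $h(h+1)/(L(1-h))$ originate. The hypothesis \eqref{eq:piaggeneralstep} must be used in two directions: once to bound $\|e_k\|^2$ by a weighted telescoping sum of $\|x_{t+1}-x_t\|^2/\gamma_t$, and a second time, with the roles of $k$ and $t$ exchanged, to argue that the set of indices $\{k : t\in[k-\tau_k,k-1]\}$ contributes a total step-size of at most $h/L$. Because $\tau_k$ is time-varying and not a priori bounded, this second use requires taking the maximal such index $k^\star$ for each fixed $t$ and invoking \eqref{eq:piaggeneralstep} at $k^\star$; keeping the constants sharp so that the admissible range $h\in(0,1)$ (rather than some more restrictive range like $h<1/2$) is preserved is precisely where the extra slack from optimizing $\alpha$ and the shift $h\mapsto\tilde h$ in part~(3) earn their keep.
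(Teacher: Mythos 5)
This statement is not proved in the paper at all: it is quoted verbatim from \cite{Wu22} (Theorem~2 there) and used as a black box inside the proof of Theorem~\ref{theo:piag}, so there is no in-paper proof to compare against. That said, your sketch follows what is, to my knowledge, exactly the strategy of the cited source and of the PIAG literature more broadly: cast \eqref{eq:PIAGxupdate} as an inexact proximal gradient step with error $e_k=g_k-\nabla f(x_k)$, bound $\|e_k\|$ by a weighted sum of $\|x_{t+1}-x_t\|^2/\gamma_t$ over the delay window via smoothness and Cauchy--Schwarz, and then swap the order of summation, invoking \eqref{eq:piaggeneralstep} a second time with the roles of $k$ and $t$ exchanged. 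You correctly identified the one genuinely delicate point --- that the index set $\{k:\,t\in[k-\tau_k,k-1]\}$ need not be an interval, so one must pass to the maximal such index $k^\star$ and use $k^\star-\tau_{k^\star}\le t$ to conclude $\sum_{k:\,t\in[k-\tau_k,k-1]}\gamma_k\le\sum_{k=k^\star-\tau_{k^\star}}^{k^\star}\gamma_k\le h/L$ (finiteness of $k^\star$ being guaranteed by Assumption~\ref{asm:asynchrony}). The only caveat is that your argument is a plan rather than a derivation: the specific constants $2(h^2-h+1)/(1-h)$, $a_0=h(h+1)/(L(1-h))$, and the $\tilde h$-shifted rate in part~(3) are asserted to ``come out'' of optimizing the Young parameter and the double-sum bookkeeping, and that bookkeeping is precisely where the work lies; as written, the sketch would need those computations carried out before it could be accepted as a proof of the stated inequalities rather than of order-equivalent ones.
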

	
	To prove Theorem \ref{theo:piag} using Theorem \ref{theo:PIAGinICML}, we first show that $\{\gamma_k\}$ in \eqref{eq:PIAGstep} satisfies \eqref{eq:piaggeneralstep}. Because $\tau_k\le ak^b+c\le ak+c$, for any $t\in [k-\tau_k, k]$, we have $t\ge k-\tau_k\ge (1-a)k-c$, so that
	\begin{equation*}
	    \gamma_t = \frac{h}{L(a(\frac{t+c}{1-a})^b+c+1)} \le \frac{h}{L(ak^b+c+1)}.
	\end{equation*}
	Using the above equation and $\tau_k\le ak^b+c$, we have
    \begin{equation*}
		\sum_{t=k-\tau_k}^k \gamma_t\le \frac{(\tau_k+1)h}{L(ak^b+c+1)}\le \frac{h}{L},
	\end{equation*}
	i.e., \eqref{eq:piaggeneralstep} holds.
	
	Next, we show that $1/\sum_{t=0}^k \gamma_t = O(1/\phi(k))$. For any $t\in\mathbb{N}$,
	\begin{equation*}
		\begin{split}
			\gamma_t &= \frac{h}{L(a(\frac{t+c}{1-a})^b+c+1)}\\
			&= \frac{h(t+c)^{-b}}{L(a(1-a)^{-b}+(c+1)(t+c)^{-b})}\\
			&\ge\frac{h(t+c)^{-b}}{L(a(1-a)^{-b}+(c+1)^{1-b})}.
		\end{split}
	\end{equation*}
	In addition, $\gamma_0=\frac{h}{L(a(\frac{c}{1-a})^b+c+1)}$. Then, for any $k\in\N$,
	\begin{equation}\label{eq:summationgammabound}
		\begin{split}
		\sum_{t=0}^{k-1} \gamma_t&\ge 
			\gamma_0+\sum_{t=1}^{k-1} \gamma_t\\
			&\ge \gamma_0+\int_1^k \frac{h(s+c)^{-b}}{L(a(1-a)^{-b}+(c+1)^{1-b})} ds\\
			&=\frac{h}{L(a(\frac{c}{1-a})^b+c+1)}\\
			&+\begin{cases}
		    \frac{h((k+c)^{1-b}-(1+c)^{1-b})}{L(a(1-a)^{-b}\!+\!(c+1)^{1-b})(1-b)}, & b\!\in\! [0,1),\\
				\frac{h\ln(\frac{k+c}{1+c})}{L(a/(1-a)+1)}, & b=1,
			\end{cases}
		\end{split}
	\end{equation}
	which indicates $1/\sum_{t=0}^{k-1} \gamma_t = O(1/\phi(k))$. Hence, the results in Theorem \ref{theo:piag} for convex and proximal PL functions hold.
	
	Also note that the result in Theorem \ref{theo:PIAGinICML} for non-convex objective functions implies
	\begin{equation}\label{eq:runningbestpiag}
	\begin{split}
	    &(\sum_{t=0}^{k-1}\gamma_t)\min_{t\le k}\|\nabla f(x_t)\!+\!\xi_t\|^2\\
	    \le& \sum_{t=1}^k\gamma_{t-1}\|\nabla f(x_t)+\xi_t\|^2\\
	    \le& \sum_{t=1}^\infty\gamma_{t-1}\|\nabla f(x_t)+\xi_t\|^2\\
	    \le&\frac{2(h^2\!-\!h\!+\!1)(P(x_0)\!-\!P^\star)}{1-h}.
	\end{split}
	\end{equation}
	By substituting \eqref{eq:summationgammabound} into \eqref{eq:runningbestpiag}, we obtain the result in Theorem \ref{theo:piag} for non-convex objective functions.

    \subsection{Proof of Theorem \ref{theo:coor}}\label{ssec:proofasyncbcd}
    The proof uses \cite[Theorem 3]{Wu22}.
    \begin{theorem}[\cite{Wu22}]
        Under the conditions in Theorem \ref{theo:coor}, if for some $h\in(0,1)$,
	\begin{equation*}
	    \sum_{t=k-\tau_k}^k \gamma_t\le \frac{h}{\hat{L}},\quad\forall k\in\N_0,
	\end{equation*}
	then
	\begin{equation*}
		\sum_{k=0}^\infty \gamma_kE[\|\tilde{\nabla} P(x_k)\|^2]\le \frac{4m(P(x_0)-P^\star)}{1-h}.
	\end{equation*}
    \end{theorem}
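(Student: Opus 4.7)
The plan is to derive a one-step descent inequality for $P(x_{k+1})-P(x_k)$ in conditional expectation, control the asynchrony error via the step-size assumption, and then telescope.

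First, I would exploit the fact that $x_{k+1}$ and $x_k$ differ only in the single block $j_k$ chosen uniformly at random from $[m]$. The $\hat L$-block-wise smoothness of $f$ then yields
\begin{equation*}
  f(x_{k+1})\le f(x_k)+\langle\nabla_{j_k}f(x_k),x_{k+1}^{(j_k)}-x_k^{(j_k)}\rangle+\tfrac{\hat L}{2}\|x_{k+1}^{(j_k)}-x_k^{(j_k)}\|^2,
\end{equation*}
and adding $r^{(j_k)}(x_{k+1}^{(j_k)})-r^{(j_k)}(x_k^{(j_k)})$ to both sides gives a bound on $P(x_{k+1})-P(x_k)$. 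I would then replace $\nabla_{j_k}f(x_k)$ by the actually-used stale gradient $\nabla_{j_k}f(x_{k-\tau_k})$ and use the optimality condition of the proximal step (equivalently, strong convexity of the prox objective) to convert the linear-plus-regularizer terms into a $-\frac{1}{\gamma_k}\|x_{k+1}^{(j_k)}-x_k^{(j_k)}\|^2$ contribution.

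Next, I would take conditional expectation over $j_k$ so that every block-wise quantity is averaged with weight $1/m$, and bound the remaining delay-error inner product
\begin{equation*}
  \langle \nabla_{j_k}f(x_k)-\nabla_{j_k}f(x_{k-\tau_k}),x_{k+1}^{(j_k)}-x_k^{(j_k)}\rangle
\end{equation*}
using Young's inequality together with block-wise smoothness of $f$. This produces a term proportional to $\hat L^2\sum_{t=k-\tau_k}^{k-1}\|x_{t+1}-x_t\|^2$ that must be absorbed. Summing the inequality over $k\in\mathbb N_0$ and swapping the order of summation converts this into $\sum_{t}\big(\hat L\sum_{s=t}^{t+\tau_s}\gamma_s\big)\cdot\text{(increment at }t\text{)}$, at which point the hypothesis $\sum_{s=k-\tau_k}^{k}\gamma_s\le h/\hat L$ caps the inner factor by $h$ and lets the error be merged into the leading descent term, leaving a net coefficient of the form $(1-h)/m$ on the averaged proximal-increment sum.

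Finally, I would relate the averaged squared proximal increment $\frac{1}{m}\sum_j\gamma_k^2\|\bar x_{k+1}^{(j)}-x_k^{(j)}\|^2$, produced by the expectation step with the delayed gradient and step size $\gamma_k\le 1/\hat L$, to the gradient mapping $\|\tilde\nabla P(x_k)\|^2$ defined with step size $1/\hat L$, using the standard firm-nonexpansiveness property of $\prox$ and the fact that the norm of the proximal gradient mapping is monotone in the step size. After this substitution and telescoping, the nonnegative sum $P(x_k)-P^\star\ge 0$ bounds the left-hand side by $P(x_0)-P^\star$ times a constant, giving the stated $4m(P(x_0)-P^\star)/(1-h)$.

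The main obstacle is the bookkeeping in the delay-absorption step: one must split the Young's-inequality parameter carefully so that after swapping summations the coefficient exactly matches the step-size condition and leaves a positive residual on the $E[\|\tilde\nabla P(x_k)\|^2]$ terms. The second delicate point is the conversion between the variable-step proximal increment appearing naturally in the descent inequality and the fixed-step $1/\hat L$ gradient mapping appearing in the statement, where an extra factor of $m$ is unavoidable because only one of the $m$ blocks is updated per iteration.
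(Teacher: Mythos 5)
The statement you are proving is not actually proved in this paper: it is quoted verbatim as \cite[Theorem~3]{Wu22} and used as a black box, so there is no in-paper argument to compare against. Judged on its own terms, your sketch follows the standard route for results of this type and is consistent with how the cited result is established: a block-wise descent inequality, prox-optimality to generate the negative $\tfrac{1}{\gamma_k}\|x_{k+1}^{(j_k)}-x_k^{(j_k)}\|^2$ term, Young's inequality plus block-wise smoothness to bound the staleness error by past increments, a summation swap so that the hypothesis $\sum_{t=k-\tau_k}^{k}\gamma_t\le h/\hat L$ caps the accumulated delay coefficient, and finally firm nonexpansiveness together with monotonicity of the gradient-mapping norm in the step size to pass from the variable-step delayed prox increment to $\|\tilde\nabla P(x_k)\|^2$; the factor $m$ from the uniform block sampling is correctly identified. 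Two points remain asserted rather than verified and are where the real work lies: (i) the gradient-staleness bound must be done with a $\gamma_t$-weighted Cauchy--Schwarz (not a bare $\tau_k$-weighted one) for the swapped sum to reduce exactly to the step-size condition, which you gesture at but do not carry out, and the residual coefficient and the constant $4$ are not derived; (ii) you should make explicit that $j_k$, though drawn at read time $k-\tau_k$, is independent of the iterates $x_{k-\tau_k},\dots,x_k$ (which are generated by other workers), so that the conditional expectation over $j_k$ is indeed uniform --- this is the modeling step that legitimizes replacing the single-block increment by $\tfrac{1}{m}$ times the full virtual prox step. Neither issue is a wrong turn, but both must be filled in for the bound $4m(P(x_0)-P^\star)/(1-h)$ to be obtained with that exact constant.
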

    Using similar derivation of \eqref{eq:runningbestpiag} in Appendix \ref{ssec:proofpiag}, we have
	\begin{equation*}
		\min_{t\le k}E[\|\tilde{\nabla} P(x_t)\|^2]\le \frac{4m(P(x_0)-P^\star)}{(1-h)\sum_{t=0}^{k-1} \gamma_t}.
	\end{equation*}
    Moreover, similar to the derivation of \eqref{eq:summationgammabound}, $1/\sum_{t=0}^k \gamma_t = O(1/\phi(k))$. Then, we obtain the result.
\subsection{Proof of \eqref{eq:maximaliter}}\label{ssec:prooflemmaseq}

By \eqref{eq:Gammatplus1}, for any $\kappa>T_{t+1}-1$, we have $\kappa - (a\kappa^b+c)>T_t$. Therefore,
\begin{equation}\label{eq:Gammatlowerbound}
    \begin{split}
        T_{t+1} &\ge T_t + aT_{t+1}^b+c\\
        &\ge T_t + aT_t^b+c,
    \end{split}
\end{equation}
where the second step uses $T_{t+1}\ge T_t$ derived from the first step.

\textbf{Case 1: $b\in [0,1)$.} The proof uses induction. Let $\eta=a(1-b)2^{-\frac{b}{1-b}}$. Suppose that $T_t\ge (\eta t)^{\frac{1}{1-b}}$ for some $t\in\N_0$, which holds naturally at $t=0,1$. Then, by \eqref{eq:Gammatlowerbound},
\begin{equation}\label{eq:Gammatdefinevt}
    \begin{split}
            & T_{t+1}-(\eta (t+1))^{\frac{1}{1-b}}\\
        \ge & (\eta t)^{\frac{1}{1-b}}+a(\eta t)^{\frac{b}{1-b}}+c-(\eta (t+1))^{\frac{1}{1-b}}\\
        =   & (\eta t)^{\frac{1}{1-b}}\underbrace{\left(1+\frac{a}{\eta t}-(1+\frac{1}{t})^{\frac{1}{1-b}}\right)}_{v(t)}+c.
    \end{split}
\end{equation}
Note that $v'(t) = -\frac{1}{t^2}(\frac{a}{\eta}-\frac{1}{1-b}(1+\frac{1}{t})^{\frac{b}{1-b}})$, which satisfies $v'(t)\le-\frac{1}{t^2}(\frac{a}{\eta}-\frac{1}{1-b}2^{\frac{b}{1-b}})=0$ when $t\ge 1$. Hence, $v(t)$ is monotonically decreasing on $[1,+\infty)$ and $v(t)\ge \lim_{\ell\rightarrow +\infty} v(\ell)=0$ for all $t\ge 1$, which, together with \eqref{eq:Gammatdefinevt}, gives $T_{t+1}\ge (\eta(t+1))^{\frac{1}{1-b}}$. Hence, $t\le \frac{(T_t)^{1-b}}{\eta}$ for all $t\in\N_0$ and $\max\{t\in\N_0:T_t\le k-1\}\le \frac{(k-1)^{1-b}}{\eta}$, i.e., $\max\{t\in\N_0:T_t\le k-1\}+1= O(k^{1-b})$.

\textbf{Case 2: $b=1$.} By \eqref{eq:Gammatlowerbound},
\begin{equation*}
    T_{t+1}\ge (1+a)T_t \ge (1+a)^{t}T_1\ge (1+a)^t,
\end{equation*}
so that $t\le \ln\frac{T_t}{1+a}+1$. Hence, $\max\{t\in\N_0:T_t\le k-1\}\le \ln\frac{k-1}{1+a}+1$, i.e., $\max\{t\in\N_0:T_t\le k-1\}+1=O(\ln k)$.

Concluding the two cases, we complete the proof.

\bibliographystyle{IEEEtran}
\bibliography{reference}

\end{document}